\newcommand{\dd}{\mathrm{d}}
\newcommand{\E}{\mathbf{E}}
\newcommand{\p}{\mathbf{P}}
\newcommand{\Z}{\mathbb{Z}}
\theoremstyle{plain}
\newtheorem{theorem}{Theorem}
\newtheorem{lemma}[theorem]{Lemma}
\newtheorem{corollary}[theorem]{Corollary}
\theoremstyle{definition}
\newtheorem*{example}{Example}
\theoremstyle{remark}
\title{Heavy-tailed critical Galton--Watson processes with immigration}
\author{P\'eter Kevei\footnote{Bolyai Institute, University of Szeged, Szeged,
Hungary. \\
E-mail: \texttt{kevei@math.u-szeged.hu}} \and 
Kata Kubatovics\footnote{Bolyai Institute, University of Szeged, Szeged, Hungary. \\
E-mail: \texttt{kubatovics@server.math.u-szeged.hu}}}
\date{}
\begin{document}

\maketitle

\begin{abstract}
Consider a critical Galton--Watson branching process with immigration, where 
the offspring distribution belongs to the domain of attraction of a $(1 + \alpha)$-stable 
law with $\alpha \in (0,1)$, and the immigration distribution
either (i) has finite mean, or (ii) belongs to the domain of attraction of a 
$\beta$-stable law with $\beta \in (\alpha, 1)$. 
We show that the tail of the stationary distribution is regularly varying.  
We analyze the stationary process, determine its tail process, and establish a stable central limit theorem for the partial sums. The norming sequence is different from the one corresponding to the tail of the stationary law. In particular, the extremal index of the process is $0$. \smallskip

\noindent 
\textit{Keywords:} Galton--Watson process with immigration, critical offspring 
distribution, regularly varying stationary sequence, heavy-tailed time series,
tail behavior \\
\noindent \textit{MSC2010:} {60J80, 60F05}
\end{abstract}

\section{Introduction}

Let $X_0 = 0$, 
and for $n \geq 0$
\begin{equation*} \label{eq:X-def}
X_{n+1} = \sum_{i=1}^{X_n} A_{n+1,i} + B_{n+1} =: 
\theta_{n+1} \circ X_n + B_{n+1},
\end{equation*}
where $\{A_{n,i}: n \geq 1, i \geq 1 \}$ are nonnegative integer-valued 
iid (independent, identically distributed) random variables, and 
independently of the $A$'s, $\{ B_n : n \geq 1 \}$
is an iid sequence of nonnegative integer-valued random variables, and $A$
and $B$ are the corresponding generic copies. 
Then $A_{n+1,i}$ is the number of offspring of individual $i$ in 
generation $n$, and $B_{n+1}$ is the number of immigrants. 
Put 
\begin{equation*} \label{eq:f-g-def}
f(s) = \E (s^A), \quad g(s) = \E (s^B), \quad s \in [0,1],
\end{equation*}
for the generating function of the offspring and the immigration 
distribution, respectively.
Foster and Williamson \cite[Theorem (iii)]{FosterWill} proved that
$X_n \stackrel{\mathcal{D}}{\to} X_\infty$ for some 
finite random variable $X_\infty$, where $\stackrel{\mathcal{D}}{\to}$
stands for convergence in distribution, if and only if 
\begin{equation} \label{eq:FW-cond}
\int_{0}^1 \frac{1-g(s)}{f(s) - s} \dd s < \infty.
\end{equation}
In this case the law of $X_\infty$ is the unique stationary distribution of 
the Markov chain, see Corollary \ref{cor:X-inf}.
Condition \eqref{eq:FW-cond} holds in the subcritical case ($\E A < 1$) 
if and only if $\E \log B < \infty$, this was proved 
by Quine \cite{Quine} in the multitype setup. However, \eqref{eq:FW-cond} 
can also hold in the critical case  ($\E A = 1$), when necessarily 
$\E A^2 = \infty$, see \cite{FosterWill}.

In the subcritical case, the tail behavior of the stationary law and 
the properties of the stationary process are well-studied.
If either the offspring or the immigration distribution is regularly 
varying (with certain index range), then the tail behavior of the 
stationary law, and the properties of the stationary process 
were investigated by Basrak et al.~\cite{BKP}. More recently, 
Foss and Miyazawa \cite{FossMiy} considered not only regularly 
varying tails, but more general tail behavior for the stationary 
law. In the random environment setup, Basrak and Kevei \cite{BK},
and Kevei \cite{K24} obtained conditions ensuring the regular 
variation of the stationary law, and also investigated the 
properties of the stationary process.

Much less is known in the critical case. Recently, Guo and Hong 
\cite{GuoHong} proved that if both the offspring and the immigration
law is regularly varying then (under additional conditions on the 
slowly varying functions) the stationary law is also regularly varying.
Zhao \cite{Zhao} obtained tail asymptotics of the 
stationary law in the borderline case, 
when the offspring distribution has tail 
$\p ( A > x ) \sim [x (\log x)^{1 + c}]^{-1}$,
$x \to \infty$, with $c > 0$, and the immigration has tail 
$\p ( B > x ) \sim x^{-1}$, $x \to \infty$. 
The properties of the stationary process are not studied.
Here, and later on $\sim$
stands for asymptotic equality, i.e.~$a(x) \sim b(x)$ as 
$x \to L \in \{0,1,\infty\}$,
if $\lim_{x \to L} \tfrac{a(x)}{b(x)} = 1$. The same convention applies to
sequences.

In the present paper we assume that the process is critical and, 
for some $\alpha \in (0,1)$ and slowly varying function $\ell_A$,
\begin{equation} \label{eq:f-ass}
f(s) = s + (1-s)^{1+\alpha} \ell_A (1/(1-s)).
\end{equation}
On the immigration we assume that one of the following 
conditions holds: 
\begin{itemize}
\item[(B1)]  $\E B < \infty$, or
 
\item[(B2)] for some $\beta \in (\alpha, 1)$ and slowly varying 
function $\ell_B$,
\begin{equation} \label{eq:g-ass}
g(s) :=  \E s^B = 1 - (1-s)^\beta \ell_B ( 1 / (1-s)).
\end{equation}
\end{itemize}
It is easy to check that under these assumptions \eqref{eq:FW-cond} holds,
thus the stationary distribution exists.

Although there are not many results for processes with immigration, 
the critical branching process without immigration, where the offspring distribution 
has the form \eqref{eq:f-ass}, has attracted a lot of attention. 
In 1968, Slack \cite{Slack} determined the extinction rate 
of the process, and obtained a Yaglom-type limit theorem in this setup. 
Borovkov and Vatutin \cite{BorVat} showed that the tail of the maximum 
of the process is $c x^{-1}$, where the tail-index $\alpha$ in \eqref{eq:f-ass}
only appears in the multiplicative constant $c$. In the same setup,
Fleischmann et al.~\cite{VWF}, among other results, 
obtained tail asymptotics for the total population.

Extending the results of Guo and Hong \cite{GuoHong}, 
in Theorem \ref{thm:tail} we show 
that the tail of the stationary distribution is regularly varying. 
We further investigate the properties of the stationary process. The theory 
of heavy-tailed time series has attracted considerable mathematical attention
recently, see e.g.~the monographs by Mikosch and Wintenberger \cite{MikoschWinten}, and by 
Kulik and Soulier \cite{KS19}.
In Theorem \ref{thm:tail-proc} we determine the tail process of the 
stationary chain. The tail process, introduced by Basrak and Segers \cite{BasrakSegers} reveals the behavior of the chain around its 
extremes. It turns out that the forward tail process is $U_0 (1,1,\ldots)$,
with $U_0$ having Pareto distribution. This rather exotic behavior implies 
that the usual anti-clustering condition does not hold. 
The general theory of heavy-tailed time series works under mild conditions
and implies point process convergence and convergence of partial sums and maxima.
These mild conditions include the anti-clustering condition together with some
mixing property. As a consequence, we cannot use the general theory.
In Theorem \ref{thm:sum} we prove a stable central limit theorem for the partial
sums. Comparing the tail of the stationary law and the norming sequence, 
we obtain that the extremal index of the process is 0.
Stationary time series with extremal index 0 are considered pathological. Apart from the Lindley process in queueing theory,
we provide a further natural example of such a process.

Section \ref{sect:stat-dist} contains the results on the stationary 
distribution. In Section \ref{sect:proc} we investigate the stationary 
process. All the proofs are gathered together in Section \ref{sect:proofs}.

\section{Stationary distribution} \label{sect:stat-dist}

If the offspring generating function has the form  \eqref{eq:f-ass} 
with some $\alpha \in (0,1)$ and slowly varying function $\ell_A$, 
and either (B1) or (B2) holds for the immigration generating function,
then the Foster--Williamson condition \eqref{eq:FW-cond} holds, thus 
$X_n \stackrel{\mathcal{D}}{\to} X_\infty$, in particular a
stationary distribution exists. 
From basic Markov chain theory we deduce that the stationary 
distribution is unique. For definitions on Markov chains we refer to 
Douc et al.~\cite{Douc}. 
Let 
\begin{equation*} \label{eq:def-k0}
k_0 = \min \{ k: \, \p ( B = k) > 0 ) \}.
\end{equation*}
Then $k_0$ is an accessible atom for the process $(X_n)$. Indeed,
for any $k> 0$
\[
\p ( X_{n+1} = k_0 | X_n = k ) = (\p ( A=0))^k \p ( B = k_0 ) > 0.
\]
Therefore, $(X_n)$ is irreducible. Furthermore, as stationary 
distribution exists, $(X_n)$ is recurrent, with a unique stationary 
distribution, see e.g.~Theorems 7.1.4 and 7.2.1 in \cite{Douc}.
Let $f_n$ denote the $n$-fold composition of the generating function $f$,
that is $f_0(s) = s$, and $f_{n+1}(s) = f_n(f(s))$, $n \geq 1$. Then,
by conditioning
\[
\E s^{X_{n+1}} = g(s) \E (f(s)^{X_n}) = 
g(s) g(f(s)) \E (f_2(s)^{X_{n-1}}) = \prod_{i=0}^n g(f_i(s)).
\]
We summarize this as follows.

\begin{corollary} \label{cor:X-inf}
If the offspring generating function has the form  \eqref{eq:f-ass} 
with some $\alpha \in (0,1)$ and slowly varying function $\ell_A$, 
and either (B1) or (B2) holds for the immigration generating function,
then a unique stationary distribution exists. Furthermore,
$X_n \stackrel{\mathcal{D}}{\rightarrow} X_\infty$, where $X_\infty$ 
has the stationary distribution, whose generating function is given by
\begin{equation} \label{eq:def-phi}
\varphi(s) : = \E s^{X_\infty} = \prod_{i=1}^\infty g(f_n(s)), \quad 
s \in [0,1].
\end{equation}
\end{corollary}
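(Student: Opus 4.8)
The plan is to read everything off the Foster--Williamson criterion together with elementary Markov chain theory; the only genuine computation, which I would carry out first, is verifying that the integral in \eqref{eq:FW-cond} is finite under the present hypotheses. By \eqref{eq:f-ass}, $f(s)-s=(1-s)^{1+\alpha}\ell_A(1/(1-s))>0$ for $s\in[0,1)$ (slowly varying functions being positive), so the integrand $\tfrac{1-g(s)}{f(s)-s}$ is continuous on $[0,1)$ and the only possible divergence is at $s=1$. There $1-g(s)\sim\E B\,(1-s)$ under (B1) and $1-g(s)=(1-s)^{\beta}\ell_B(1/(1-s))$ under (B2), so, as a function of $1-s$, the integrand is regularly varying of index $-\alpha$ in case (B1) and of index $\beta-1-\alpha$ in case (B2). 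Both exponents exceed $-1$ — the second one precisely because $\beta>\alpha$ — so by Karamata's theorem the integral converges and \eqref{eq:FW-cond} holds.

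Next I would invoke \cite[Theorem (iii)]{FosterWill}: finiteness of \eqref{eq:FW-cond} yields $X_n\stackrel{\mathcal D}{\to}X_\infty$ with $X_\infty$ a proper random variable; put $\varphi(s)=\E s^{X_\infty}$. For each $s\in[0,1)$ the continuity theorem for generating functions gives $\E s^{X_{n+1}}\to\varphi(s)$, and combined with the identity $\E s^{X_{n+1}}=\prod_{i=0}^{n}g(f_i(s))$ established above this shows that the infinite product $\prod_{i\ge0}g(f_i(s))$ converges to $\varphi(s)$ (with $f_0(s)=s$, and the value being strictly positive for $s>0$); at $s=1$ both sides equal $1$. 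Finally, since $f(s)\in[0,1)$ for $s\in[0,1)$, letting $n\to\infty$ in $\E s^{X_{n+1}}=g(s)\,\E(f(s)^{X_n})$ gives $\varphi(s)=g(s)\,\varphi(f(s))$, so the law of $X_\infty$ is a stationary distribution of $(X_n)$.

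Uniqueness I would take straight from the remarks preceding the statement: $k_0$ is an accessible atom, hence $(X_n)$ is irreducible; since a stationary distribution exists the chain is recurrent; so by Theorems 7.1.4 and 7.2.1 in \cite{Douc} the stationary distribution is unique, and it must therefore be the law of $X_\infty$. I do not expect any real obstacle: the last two paragraphs are soft, and the single calculation — the Karamata estimate in the first paragraph — is routine. It is, however, exactly the place where the hypothesis $\beta>\alpha$ is used rather than merely $\beta>0$, since for $\beta\le\alpha$ the integrand in \eqref{eq:FW-cond} would fail to be integrable at $s=1$ and no stationary distribution would exist.
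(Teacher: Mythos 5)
Your proposal is correct and follows essentially the same route as the paper: verify the Foster--Williamson condition \eqref{eq:FW-cond} (which the paper leaves as ``easy to check'' and you spell out via Karamata), invoke \cite{FosterWill} for $X_n \stackrel{\mathcal{D}}{\to} X_\infty$, obtain the product formula by letting $n\to\infty$ in $\E s^{X_{n+1}}=\prod_{i=0}^n g(f_i(s))$, and get uniqueness from the accessible atom $k_0$ together with Theorems 7.1.4 and 7.2.1 in \cite{Douc}. The only quibble is your closing remark that for $\beta\le\alpha$ no stationary distribution would exist: at the boundary $\beta=\alpha$ the integrand is regularly varying of index $-1$ and integrability then depends on the slowly varying functions, so that claim is too strong, though it does not affect the proof of the stated corollary.
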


Analyzing the asymptotics of the generating function in \eqref{eq:def-phi}, 
we determine the tail behavior of $X_\infty$.
For easier reference, we state a well-known result on the relation of the 
tail asymptotics at infinity and the generating function asymptotics at 1. 
We could not locate the precise statement in the literature, but it
follows easily from a Tauberian theorem, see Corollary 8.1.7 in Bingham
et al.~\cite{BGT}, or Section 1.1 in \cite{GuoHong}.

\begin{lemma} \label{lemma:tail-genfunc}
Let $Y$ be a nonnegative integer-valued random variable, $\mu \in (0,1)$, and
$\ell$ a slowly varying function. The following are equivalent:
\begin{itemize}
\item[(i)] $\p ( Y > x) \sim \frac{\ell(x)}{x^\mu \Gamma(1-\mu)}$ as $x \to \infty$;
\item[(ii)] $1 - \E ( s^{Y}) \sim \ell(1/(1-s)) (1-s)^{\mu}$ as $s \uparrow 1$.
\end{itemize}
\end{lemma}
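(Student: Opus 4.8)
The plan is to reduce the statement to Karamata's Tauberian theorem for power series together with the monotone density theorem. Write $p_k = \p(Y = k)$, $P(s) = \E(s^Y) = \sum_{k \ge 0} p_k s^k$, and $q_n = \p(Y > n) = \sum_{k > n} p_k$. The starting point is the elementary identity
\begin{equation*}
U(s) := \sum_{n \ge 0} q_n s^n = \sum_{k \ge 0} p_k \sum_{j=0}^{k-1} s^j = \sum_{k \ge 0} p_k \, \frac{1 - s^k}{1-s} = \frac{1 - P(s)}{1-s}, \qquad s \in [0,1),
\end{equation*}
where the interchange of summation order is legitimate because all terms are nonnegative. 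Hence condition (ii) is equivalent to $U(s) \sim (1-s)^{-(1-\mu)} \ell(1/(1-s))$ as $s \uparrow 1$, that is, to the generating function of the nonnegative sequence $(q_n)$ being regularly varying of index $-(1-\mu)$ at $1$.

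Next I would invoke Karamata's Tauberian theorem (see \cite{BGT}, Theorem~1.7.1 or Corollary~1.7.3): for the nonnegative coefficients $q_n$, the relation $U(s) \sim (1-s)^{-(1-\mu)}\ell(1/(1-s))$ holds if and only if $\sum_{k=0}^n q_k \sim \frac{n^{1-\mu}}{\Gamma(2-\mu)}\,\ell(n)$ as $n \to \infty$. Since $(q_n)$ is non-increasing, applying the monotone density theorem (\cite{BGT}, Theorem~1.7.2, to the step function $x \mapsto q_{\lfloor x \rfloor}$) converts this into $q_n \sim \frac{(1-\mu)\,n^{-\mu}}{\Gamma(2-\mu)}\,\ell(n) = \frac{n^{-\mu}\ell(n)}{\Gamma(1-\mu)}$, where I used $\Gamma(2-\mu) = (1-\mu)\Gamma(1-\mu)$; because $\p(Y > x) = q_{\lfloor x \rfloor}$ and $\lfloor x\rfloor/x \to 1$, this is precisely (i). For the converse one runs the same chain in reverse: (i) makes $(q_n)$ regularly varying of index $-\mu$ with $\mu < 1$, Karamata's theorem on summation of regularly varying sequences yields $\sum_{k=0}^n q_k \sim \frac{n^{1-\mu}}{\Gamma(2-\mu)}\,\ell(n)$, and the Abelian half of Karamata's Tauberian theorem gives back $U(s) \sim (1-s)^{-(1-\mu)}\ell(1/(1-s))$, hence (ii) upon multiplying by $1-s$. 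Every step is an equivalence, so the lemma follows; alternatively one may simply quote \cite{BGT}, Corollary~8.1.7.

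I do not anticipate a genuine obstacle, as the result is classical; the only care needed is the bookkeeping of the slowly varying function and the multiplicative constant, which is handled by the $\Gamma$-identity above and by the invariance of the slowly varying part under $x \mapsto \lfloor x\rfloor$ and under the correspondence between $n$ and $1/(1-s)$. It is also worth noting that the monotonicity of $(q_n)$ is exactly what supplies the Tauberian side condition, so no additional assumption on $Y$ beyond nonnegativity and integer values is required.
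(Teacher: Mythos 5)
Your proof is correct and follows essentially the same route as the paper, which simply invokes Karamata's Tauberian theorem via \cite[Corollary 8.1.7]{BGT}; your derivation (tail generating function $\sum_n \p(Y>n)s^n = (1-\E s^Y)/(1-s)$, Karamata's Tauberian theorem for power series, then the monotone density theorem with $\Gamma(2-\mu)=(1-\mu)\Gamma(1-\mu)$) is just the standard argument behind that corollary, spelled out in full.
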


We can state our main result on the tail of the stationary distribution.

\begin{theorem} \label{thm:tail}
Assume that with some $\alpha \in (0,1)$, and with a slowly varying 
function $\ell_A$ condition \eqref{eq:f-ass} holds. 
\begin{itemize}
\item[(i)] If (B1) holds, then
\[
\p ( X_\infty > x ) \sim 
\frac{g'(1)}{(1-\alpha) \Gamma(\alpha)}
x^{-(1-\alpha)} \ell_A(x)^{-1} \quad \text{as } x \to \infty.
\]

\item[(ii)] If (B2) holds, then 
\[
\p ( X_\infty > x ) \sim 
\frac{1}{(\beta - \alpha) \Gamma(1 - \beta + \alpha)} x^{-(\beta-\alpha)} 
\frac{\ell_B(x)}{\ell_A(x)}
\quad \text{as } x \to \infty.
\]
\end{itemize}
\end{theorem}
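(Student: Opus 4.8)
The plan is to analyze the asymptotics of $1 - \varphi(s)$ as $s \uparrow 1$ using the product formula \eqref{eq:def-phi}, and then invoke Lemma~\ref{lemma:tail-genfunc} to translate this into tail asymptotics for $X_\infty$. Writing $1 - \varphi(s) = 1 - \prod_{n=1}^\infty g(f_n(s))$ and taking logarithms, the key quantity is the series $\sum_{n=1}^\infty \bigl(1 - g(f_n(s))\bigr)$, which (once we know each term is small and the sum is finite) is asymptotically equivalent to $-\log \varphi(s) \sim 1 - \varphi(s)$. So everything reduces to understanding the iterates $f_n(s)$ and then summing $1 - g(f_n(s))$.

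First I would record the classical asymptotics for the iterates of a generating function of the form \eqref{eq:f-ass}. Setting $u_n = u_n(s) := 1 - f_n(s)$, the recursion $u_{n+1} = u_n - u_n^{1+\alpha}\ell_A(1/u_n)$ is the discrete analogue of the ODE $\dot v = -v^{1+\alpha}\ell_A(1/v)$, and a standard argument (going back to Slack~\cite{Slack}, and the Seneta–Heyde/de~Bruijn type analysis in \cite{BGT}) gives $u_n(s) \sim (\alpha n\, \ell_A(n^{1/\alpha}))^{-1/\alpha}$ uniformly in $s$ bounded away from $1$ once $n$ is large, and more precisely, for fixed $s < 1$, $n^{1/\alpha}\ell_A(n^{1/\alpha})^{1/\alpha} u_n(s) \to \alpha^{-1/\alpha}$ as $n \to \infty$; for small $n$ (or $s$ close to $1$) one has $u_n(s) \approx 1 - s$. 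The correct way to make this uniform is to split the sum over $n$ at the index $N = N(s)$ where $u_n(s)$ transitions from being comparable to $1-s$ to following the universal decay rate; roughly $N(s) \asymp (1-s)^{-\alpha}/\ell_A(1/(1-s))$.

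Next I would estimate $1 - g(f_n(s)) = 1 - g(1 - u_n(s))$. Under (B1), $1 - g(1-u) \sim g'(1)\, u$ as $u \to 0$, so $\sum_n \bigl(1 - g(f_n(s))\bigr) \sim g'(1)\sum_n u_n(s)$; under (B2), $1 - g(1-u) = u^\beta \ell_B(1/u)$, so the sum is $\sim \sum_n u_n(s)^\beta \ell_B(1/u_n(s))$. In both cases the sum is dominated by the "early" terms $n \lesssim N(s)$, where $u_n(s)$ is still of order $1-s$: for $n$ beyond $N(s)$ the terms decay like $n^{-1/\alpha}$ (resp.\ $n^{-\beta/\alpha}$ up to slowly varying factors), which is summable precisely because $\alpha < 1$ (resp.\ $\beta > \alpha$), and the tail of that series is lower order. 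Approximating the early part of the sum by an integral and substituting $v = u_n(s)$, i.e.\ $\dd n \approx -\dd v /(v^{1+\alpha}\ell_A(1/v))$, turns $\sum u_n^\gamma \ell_B(1/u_n)$ into $\int_{1-s}^{\cdot} v^{\gamma - 1 - \alpha} \ell_B(1/v)/\ell_A(1/v)\, \dd v$, which by Karamata is regularly varying in $1-s$ of the right index: $\gamma = 1$, exponent $1-\alpha$, slowly varying part $\ell_A(1/(1-s))^{-1}$ times the constant $\tfrac{g'(1)}{1-\alpha}$ in case (B1); $\gamma = \beta$, exponent $\beta - \alpha$, slowly varying part $\ell_B/\ell_A$ times $\tfrac{1}{\beta-\alpha}$ in case (B2). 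Feeding $\mu = 1-\alpha$ (resp.\ $\mu = \beta - \alpha$) into Lemma~\ref{lemma:tail-genfunc} then produces the stated constants, the $\Gamma(\alpha)$ and $\Gamma(1-\beta+\alpha)$ appearing from $\Gamma(1-\mu)$.

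The main obstacle is making the heuristic "$\sum_n \approx \int \dd n \approx \int \dd v/(v^{1+\alpha}\ell_A(1/v))$" rigorous \emph{uniformly} as $s \uparrow 1$: one has to control the iterates $u_n(s)$ with enough precision over the whole range of $n$, handle the slowly varying functions (which forces a careful use of the uniform convergence theorem and Potter bounds for $\ell_A, \ell_B$, and of Karamata's theorem), and verify that the error from replacing the sum by the integral, and from truncating the series, is genuinely of smaller order than the main term. A clean way to organize this is to prove two-sided bounds for $u_n(s)$ of the form $c_1 \,\omega(n,s) \le u_n(s) \le c_2\, \omega(n,s)$ with an explicit $\omega$, and then sandwich the sum; the slowly varying factors can be frozen on dyadic blocks. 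Once the uniform asymptotics of $\sum_n (1-g(f_n(s)))$ are in hand, the passage to $1-\varphi(s)$ and then to the tail of $X_\infty$ is routine.
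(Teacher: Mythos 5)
Your skeleton is the same as the paper's: reduce to $1-\varphi(s)\sim-\log\varphi(s)\sim\sum_{n\ge0}\bigl(1-g(f_n(s))\bigr)$, control the iterates $1-f_n(s)$ via Slack-type asymptotics, evaluate the sum by an integral/Karamata argument, and finish with Lemma~\ref{lemma:tail-genfunc}; your final exponents, slowly varying factors and constants are the ones in the statement. However, two steps as you describe them would not survive being made rigorous. First, the claim that the sum is dominated by the early terms $n\lesssim N(s)$ and that the tail beyond $N(s)$ is of lower order is false. For $n\le N(s)$ the iterate $1-f_n(s)$ only decreases from $1-s$ by a bounded factor, so the early part contributes $\asymp N(s)(1-s)\asymp(1-s)^{1-\alpha}/\ell_A(1/(1-s))$ in case (B1), while the tail $\sum_{n>N(s)}$ of the terms decaying like $n^{-1/\alpha}$ (with its slowly varying factor) is of exactly the same order, not smaller. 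The full constant $1/(1-\alpha)$ (resp.\ $1/(\beta-\alpha)$) arises only from the whole range: in your substitution $v=u_n(s)$ the Karamata evaluation you quote is the integral over all of $(0,1-s)$, i.e.\ it silently includes the late terms you declared negligible; if you literally estimated the early part and discarded the tail you would get a strictly smaller constant, and the theorem is precisely about the constant. (In the paper this is visible in the split \eqref{eq:phi-sum-aux1}--\eqref{eq:phi-sum-aux3}: the discarded pieces are small only relative to the cut parameter $N\to\infty$, and the main term carries $\int_{1/N}^{N}(1+y)^{-1/\alpha}\,\mathrm{d}y\to\alpha/(1-\alpha)$.)

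Second, your proposed uniformity device --- two-sided bounds $c_1\,\omega(n,s)\le u_n(s)\le c_2\,\omega(n,s)$ with fixed constants, frozen on dyadic blocks --- can only give the correct order of $1-\varphi(s)$, not the asymptotic equivalence with explicit constant that the theorem asserts; you would need $c_1,c_2\to1$, i.e.\ genuinely sharp asymptotics for $1-f_n(s)$ jointly in $n$ and $s\uparrow1$. The paper obtains this with no extra analysis of the recursion: choose $m(s)$ with $f_{m(s)}(0)<s<f_{m(s)+1}(0)$; monotonicity then gives $1-f_n(s)\sim1-f_{n+m(s)}(0)$, and Slack's result for $1-f_n(0)$ (via the de Bruijn conjugate, as in \eqref{eq:fn-asy} and \eqref{eq:m-asy}) yields the exact uniform asymptotics needed. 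Relatedly, your displayed formula $u_n(s)\sim(\alpha n\,\ell_A(n^{1/\alpha}))^{-1/\alpha}$ is the naive inversion and is not correct for general $\ell_A$; the inversion has to go through the de Bruijn conjugate (your integral substitution $\mathrm{d}n\approx-\mathrm{d}v/(v^{1+\alpha}\ell_A(1/v))$ avoids this nicely, and is a slicker way to see the constants than the paper's three-range split, but it must be justified over the whole range of $n$ with the sharp uniform control above). With these two repairs your argument becomes essentially the paper's proof.
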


In case (ii), under further assumption on the slowly varying functions, 
Guo and Hong \cite[Theorem 1.1]{GuoHong} obtained the regular variation 
of the tail of
the stationary distribution, without specifying the slowly varying function.
In our result we do not assume further assumptions on the slowly varying functions,
and we obtain explicit expression for the tail. Case (i) is completely new.

We also note that the result can be extended further to 
the boundary cases, when $\alpha = 0$ or $1$, or when $\beta = \alpha$ or $1$.
In these boundary cases however, further assumptions are needed on the 
slowly varying functions in order to deduce the tail asymptotic. In particular,
the corresponding slowly varying functions have to belong to the de Haan class,
see \cite[Corollary 8.1.7]{BGT} and the remark after it. See also our remarks
after Lemmas \ref{lemma:T-tail} and \ref{lemma:rsum-tail}.
In order to keep the presentation simpler, we decided to exclude these cases.

\begin{example}[Power-fractional offspring distribution or theta-branching]
For a Galton--Watson process without immigration, the generating function of 
the population in generation $n$ is the $n$-fold composition of the 
offspring generating function, which usually cannot be calculated 
explicitly. The linear fractional offspring distribution is an 
important example, because the $n$-fold composition has an 
explicit form, see e.g.~Section I.4 in Athreya and Ney \cite{AthreyaNey}. 
The linear fractional distribution is a modified geometric distribution,
and it has finite exponential moments. More recently, Sagitov and Lindo
\cite{SagitovLindo} introduced a new class of possibly defective 
offspring distributions, including heavy-tailed distributions, 
where the composition is explicit. The resulting process is 
called \emph{theta-branching process}. 
Alsmeyer and Hoang \cite{Alsmeyer} call this class 
\emph{power-fractional distributions}. 

A critical power-fractional generating function has the form
\[
f(s) = 1 - \left[ (1-s)^{-\alpha} + 1 \right]^{-1/\alpha}, \quad 
s \in [0,1],
\]
where $\alpha \in (0,1)$.
Then $f$ has the form \eqref{eq:f-ass} with $\ell_A(x) \sim \alpha$ as 
$x \to \infty$. Let $f_n$ stand for the $n$-fold composition. 
Then \cite[formula (1.7)]{Alsmeyer}
\[
f_n(s) = 1  - \frac{1 - s}{(1 + n (1-s)^\alpha)^{1/\alpha}}.
\]
Assume that the immigration is constant 1, i.e.~$g(s) = s$. Then,
by Corollary \ref{cor:X-inf} the generating function of the 
stationary distribution is
\begin{equation} \label{eq:phi-pf1}
\varphi(s) = \prod_{n=0}^\infty 
\left( 1  - \frac{1 - s}{(1 + n (1-s)^\alpha)^{1/\alpha}} \right).
\end{equation}
If $\alpha = 1/2$ the above formula can be made explicit. Indeed,
with $t = \sqrt{1-s}$, we have 
\[
1  - \frac{1 - s}{(1 + n (1-s)^{1/2})^{2}}
= 1 - \frac{t^2}{(1 + n t)^2} = \frac{(1 + (n-1)t)(1 +(n+1)t)}{(1+nt)^2}.
\]
Thus, substituting back into \eqref{eq:phi-pf1}
\[
\varphi(s) = \prod_{n=0}^\infty 
\frac{(1 + (n-1)t)(1 +(n+1)t)}{(1+nt)^2} = 
1 - t = 1 - \sqrt{1-s}.
\]
Note that $\varphi(s^2)$ is the generating function of the first
return time to 0 in a simple symmetric random walk, see e.g.~Feller 
\cite[Section XI.3]{Feller1}.
\end{example}

\section{The stationary Markov chain} \label{sect:proc}

Let $(X_n)_{n \in \Z}$ be the stationary process version of the 
Galton--Watson process with immigration, defined as
\begin{equation} \label{eq:X-def-stat}
X_{n+1} = \sum_{i=1}^{X_n} A_{n+1,i} + B_{n+1} = \theta_{n+1} \circ X_n + B_{n+1}.
\end{equation}
In what follows we only need that the stationary distribution is 
regularly varying. Assume that for some $\gamma \in (0,1)$ the tail 
of the stationary distribution $X_\infty$ satisfies
\begin{equation} \label{eq:stat-ass}
\p ( X_\infty > x ) \sim \frac{\ell(x)}{x^{\gamma}}, \quad x \to \infty,
\end{equation}
where $\ell$ is a slowly varying function. 
We showed that $ \gamma = 1-\alpha$ if 
\eqref{eq:f-ass} holds and $\E B < \infty$, and  
$\gamma = \beta- \alpha$ if \eqref{eq:f-ass} and \eqref{eq:g-ass}
hold with $1 \geq \beta > \alpha > 0$.

\subsection{Tail process}

The tail process of a stationary process $(X_n)_{n \in \Z}$, 
if it exists, is a process $(U_n)_{n \in \Z}$ such that
for any $k, \ell \geq 0$, as $x \to \infty$
\[
\mathcal{L}  \left( x^{-1} (X_{-\ell}, X_{1-\ell}, \ldots, X_k) | X_0 > x \right) 
\stackrel{\mathcal{D}}{\longrightarrow} (U_{-\ell}, U_{-\ell + 1}, \ldots, U_k),
\]
and $\p (U_0 > y) = y^{-\gamma}$, $y \geq 1$, for some $\gamma > 0$.

The tail process was introduced by Basrak and Segers \cite{BasrakSegers},
and became an essential tool in the analysis of heavy-tailed time series.
It describes the behavior of the process $(X_n)$ around its extreme values.
For definition and properties we refer to the original paper \cite{BasrakSegers},
and to the monographs \cite[Section 4.2]{MikoschWinten} and 
\cite[Chapter 5]{KS19}.
The existence of the tail process is equivalent to the regular variation of 
the stationary process $(X_n)$. Furthermore, it is enough to prove the existence 
of one-sided tail process, that is for $k=0, \ell \geq 0$, or for 
$k \geq 0, \ell = 0$, see \cite[Theorem 4.2.1]{MikoschWinten}.

\begin{theorem} \label{thm:tail-proc}
Let $(X_n)$ be the stationary Markov chain in \eqref{eq:X-def-stat}, and 
assume that for the stationary distribution \eqref{eq:stat-ass} holds.
Then 
\[
\mathcal{L} ( x^{-1} \, (X_i)_{i\geq 0} | X_0 > x ) \to 
U_0 (1,1,1,\ldots),
\]
where $\p ( U_0 > x ) = x^{-\gamma}$, for $x \geq 1$.
\end{theorem}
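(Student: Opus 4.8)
The plan is to establish the one-sided (forward) tail process, which by \cite[Theorem 4.2.1]{MikoschWinten} suffices, and to do so by an induction on the number of coordinates $k$. The base case $k=0$ is immediate: by \eqref{eq:stat-ass}, $\mathcal{L}(x^{-1} X_0 \mid X_0 > x)$ converges to the Pareto law of $U_0$. For the inductive step it is enough, by the Markov property and a standard ``one coordinate at a time'' argument, to understand the conditional law of $x^{-1} X_{1}$ given $X_0 > x$, and more precisely given $X_0 \approx ux$ for $u \geq 1$. So the heart of the matter is the following: conditionally on $X_0 = m$ with $m \to \infty$, show that $X_1 / m \to 1$ in probability, i.e.\ that $X_1 = \theta_1 \circ X_0 + B_1$ concentrates around $X_0$ when $X_0$ is large.

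The key estimate is thus about the branching operation $\theta_1 \circ m = \sum_{i=1}^m A_{1,i}$, a sum of $m$ i.i.d.\ copies of $A$, where $\E A = 1$ (criticality) and $A$ is in the domain of attraction of a $(1+\alpha)$-stable law. By the weak law of large numbers for triangular arrays (or directly: $\E(\theta_1\circ m) = m$ and the fluctuations are of order $m^{1/(1+\alpha)} \ell(m)$ for a slowly varying $\ell$, which is $o(m)$ since $1/(1+\alpha) < 1$), we get $(\theta_1 \circ m)/m \to 1$ in probability. The immigration term contributes $B_1/m \to 0$ in probability (under (B1), $B_1$ has finite mean so is tight; under (B2), $B_1/m \to 0$ since $B_1$ is a.s.\ finite — indeed its tail is regularly varying of any index, in particular $B_1 = o(m)$ w.p.\ $\to 1$). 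Hence $X_1/X_0 \to 1$ in probability conditionally on $\{X_0 = m\}$ as $m \to \infty$. Combining this with the limiting law of $x^{-1} X_0$ given $X_0 > x$ (which puts all its mass on $[1,\infty)$, hence on arbitrarily large values of $X_0$ as $x \to \infty$) and a routine dominated-convergence/Slutsky argument gives $x^{-1}(X_0, X_1) \to (U_0, U_0)$ given $X_0 > x$. Iterating, $x^{-1}(X_0, X_1, \dots, X_k) \to U_0(1,1,\dots,1)$ for every fixed $k$, which is the claim.

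To make the concentration step quantitative and uniform enough to feed the induction, I would proceed as follows. Fix $\varepsilon > 0$ and show $\lim_{m\to\infty} \p(|\theta_1\circ m - m| > \varepsilon m) = 0$ using either a truncation-plus-Chebyshev argument tailored to regularly varying summands, or simply citing the classical weak law of large numbers / stable-CLT for i.i.d.\ sums in the domain of attraction of a $(1+\alpha)$-stable law: $(\theta_1\circ m - m)/a_m \Rightarrow$ a stable law with some norming $a_m = m^{1/(1+\alpha)}L(m)$, and $a_m/m \to 0$. For the immigration, under (B1) use Markov's inequality: $\p(B_1 > \varepsilon m) \leq \E B_1 /(\varepsilon m) \to 0$; under (B2), $\p(B_1 > \varepsilon m)$ is regularly varying with index $-\beta < 0$, hence $\to 0$. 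Then, writing $\p(X_0 > x)$ and decomposing over dyadic-type blocks $\{ux < X_0 \leq u'x\}$, transfer the conditional concentration of $X_1$ around $X_0$ through to the unconditional statement; the Pareto limit of $x^{-1}X_0$ controls the tail contribution.

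The main obstacle, and the reason this needs care rather than being a one-line consequence, is the uniformity: the concentration $X_1/X_0 \to 1$ holds as $X_0 \to \infty$, but conditioning on $\{X_0 > x\}$ mixes over all values of $X_0$ above $x$, including those comparable to $x$ (the bulk of the Pareto mass). One must check that the convergence $\p(|X_1 - X_0| > \varepsilon X_0 \mid X_0 = m) \to 0$ is uniform over $m \geq \delta x$ for the relevant range, which it is because the bound depends on $m$ only through quantities that are monotone or controlled for large $m$; alternatively, a clean way around this is to first note that $\mathcal{L}(x^{-1}X_0 \mid X_0 > x)$ is tight and converges, so for any $\eta > 0$ one can restrict to $\{X_0 > \delta x\}$ up to probability $\eta$, and on that event apply the concentration estimate with $m = X_0 \geq \delta x \to \infty$. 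Beyond this, everything is routine: the Markov structure makes the multi-step extension automatic once the one-step behavior is pinned down.
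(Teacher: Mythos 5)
Your proposal is correct and follows essentially the same route as the paper: the paper also deduces the one-step concentration $X_1/X_0 \to 1$ from the law of large numbers for the offspring sums (with the single immigration term being negligible), and then extends to all finite-dimensional forward distributions by induction, citing the sufficiency of the forward tail process. Your extra care about uniformity in $m$ is harmless but not really needed, since conditioning on $\{X_0 > x\}$ only involves values $m \geq x \to \infty$, where the concentration estimate applies directly.
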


We note here that the Lindley process has the same tail 
process, see \cite[Section 5.7]{MikoschWinten}.

For a  stationary regularly varying time series $(Y_n)$
choose $a_n$ such that $\p ( Y_0 > a_n ) \sim n^{-1}$. The anti-clustering 
condition $\mathcal{AC}(r_n, a_n)$ (\cite[Condition 4.1]{BasrakSegers},
\cite[Definition 6.1.2]{KS19}, \cite[(6.1.2)]{MikoschWinten}) holds, 
if for each $u > 0$
\begin{equation} \label{eq:antic}
\lim_{m \to \infty} \limsup_{n \to \infty} \p 
\left( \max_{m \leq |k| \leq r_n} |Y_k| > a_n u  \, \big| \, |Y_0| > a_n u\right) 
= 0,
\end{equation}
for some $r_n$, with $r_n \to \infty$ and $r_n / n \to 0$ as $n \to \infty$.
Theorem \ref{thm:tail-proc} combined with
Proposition 4.2 in \cite{BasrakSegers} (\cite[Lemma 6.11]{KS19}) implies that \eqref{eq:antic} does 
not hold for $(X_n)$.
To apply the powerful machinery of heavy-tailed time series one 
needs the anti-clustering condition together with some weak form of mixing.
Therefore, we cannot apply the general theory.

\subsection{Critical Galton--Watson process without immigration}

Here we recall some known results on critical Galton--Watson processes.
Assume that for the offspring generating function
\eqref{eq:f-ass} holds for some $\alpha \in [0,1]$.
For $\alpha = 0$ we further assume that 
$\lim_{x \to \infty} \ell_A(x) = 0$, which implies that 
$\E A = 1$.
Consider a Galton--Watson process with offspring 
generating function $f$ as follows. Let $Z_0 = 1$, and for $n \geq 0$
\begin{equation*} \label{eq:Z-def}
Z_{n+1} = \sum_{i=1}^{Z_n} A_{n+1, i},
\end{equation*}
where $\{ A_{n,i}: n\geq 1, i \geq 1 \}$ are iid random variables with 
generating function $f$.
The process is critical, therefore the total progeny is a.s.~finite,
\begin{equation} \label{eq:T-def}
T = Z_0 + Z_1 + \ldots .
\end{equation}
Let $h(s) = \E s^T$ denote the generating function of $T$. Then
the functional equation
$h(s) = s f(h(s))$ easily implies tail asymptotics for $T$.
The following statement is Lemma 6 (or Theorem 2) in 
\cite{VWF},
see also \cite[Lemma 2.1]{GuoHong}.
In the current form, we allow also $\alpha = 0$, which was 
excluded both in \cite{VWF} and \cite{GuoHong},
and keep a precise track of the appearing slowly varying function.
For completeness, we provide a short proof.
First recall the notion of de Bruijn conjugate, see e.g.~\cite[Theorem 1.5.13]{BGT}. 
If $\ell$ is a slowly varying function, there exists a slowly varying 
function $\ell^{\#}$, unique up to asymptotic equivalence, such that 
\begin{equation*} \label{eq:deBruijn}
\ell(x) \ell^{\#}(x \ell(x)) \to 1, \quad 
\ell^{\#}(x) \ell(x \ell^{\#}(x)) \to 1, \quad x \to \infty.
\end{equation*}
Furthermore, $\ell^{\# \#} \sim \ell$. In what follows, we often use these 
properties.

\begin{lemma} \label{lemma:T-tail}
Assume \eqref{eq:f-ass} with $\alpha \in [0,1]$, and for 
$\alpha = 0$ assume further that $\lim_{x \to \infty}\ell_A(x) = 0$.
Let $\ell_{A,1} (x) = \ell_A(x)^{-1/(1+\alpha)}$.
Then for the generating function of the total progeny, we have
as $s \uparrow 1$,
\begin{equation} \label{eq:h-form}
1- h(s) \sim (1-s)^{1/(1+\alpha)} 
\frac{1}{\ell_{A,1}^{\#}((1-s)^{-1/(1+\alpha)})}.
\end{equation} 
Moreover, for $\alpha > 0$
\begin{equation} \label{eq:T-tail}
\p ( T > x ) \sim 
\left( 
x^{1/(1+\alpha)} 
{\ell_{A,1}^{\#}(x^{1/(1+\alpha)})} \Gamma(\alpha/(1+\alpha) \right)^{-1}.
\end{equation}
\end{lemma}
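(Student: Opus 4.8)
The plan is to start from the functional equation $h(s) = s f(h(s))$, which holds for the generating function of the total progeny of a critical Galton--Watson process, and substitute the assumed form \eqref{eq:f-ass}. Writing $u = 1 - h(s)$ and $v = 1-s$, the equation $h(s) = s f(h(s))$ becomes
\begin{equation*}
1 - u = (1-v)\bigl( (1-u) + u^{1+\alpha} \ell_A(1/u) \bigr),
\end{equation*}
which rearranges to $v(1-u) = (1-v) u^{1+\alpha} \ell_A(1/u)$, i.e.
\begin{equation*}
v = \frac{(1-v)}{(1-u)} \, u^{1+\alpha} \ell_A(1/u).
\end{equation*}
As $s \uparrow 1$ we have $v \downarrow 0$, and since $h$ is continuous with $h(1)=1$ we also have $u \downarrow 0$, so the prefactor $(1-v)/(1-u) \to 1$. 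Hence $v \sim u^{1+\alpha}\ell_A(1/u)$ as $u \downarrow 0$. The task is then to invert this asymptotic relation to express $u$ in terms of $v$.

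To invert, I would pass to the variable $x = 1/u \to \infty$ and $y = 1/v \to \infty$, so that the relation reads $y \sim x^{1+\alpha}/\ell_A(x)$, or equivalently $1/v \sim x^{1+\alpha} \ell_A(x)^{-1}$. Setting $\psi(x) = x^{1+\alpha}\ell_A(x)^{-1}$, a regularly varying function of index $1+\alpha$, I need the asymptotic inverse of $\psi$. By the standard theory of asymptotic inversion of regularly varying functions (see \cite[Theorem 1.5.12]{BGT}), the inverse is regularly varying of index $1/(1+\alpha)$ and, writing $\ell_{A,1}(x) = \ell_A(x)^{-1/(1+\alpha)}$ as in the statement, one computes $\psi^{\leftarrow}(y) \sim y^{1/(1+\alpha)} \ell_{A,1}^{\#}\bigl(y^{1/(1+\alpha)}\bigr)$, where $\ell_{A,1}^{\#}$ is the de Bruijn conjugate introduced in the excerpt. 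Translating back: $x = 1/u \sim \psi^{\leftarrow}(1/v)$, hence $u \sim v^{1/(1+\alpha)} / \ell_{A,1}^{\#}(v^{-1/(1+\alpha)})$, which is precisely \eqref{eq:h-form} after substituting $u = 1-h(s)$, $v = 1-s$.

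For the second part \eqref{eq:T-tail}, I would apply Lemma \ref{lemma:tail-genfunc} with $Y = T$ and $\mu = 1/(1+\alpha) \in (0,1)$. From \eqref{eq:h-form} we have $1 - \E s^T \sim (1-s)^{1/(1+\alpha)} \bigl(\ell_{A,1}^{\#}((1-s)^{-1/(1+\alpha)})\bigr)^{-1}$, and the slowly varying function appearing here is $\ell(x) = 1/\ell_{A,1}^{\#}(x^{1/(1+\alpha)})$ (this is slowly varying since $\ell_{A,1}^{\#}$ is and $x \mapsto x^{1/(1+\alpha)}$ only rescales). Lemma \ref{lemma:tail-genfunc}(ii)$\Rightarrow$(i) then gives $\p(T > x) \sim \ell(x)/(x^{1/(1+\alpha)} \Gamma(1 - 1/(1+\alpha)))$, and since $1 - 1/(1+\alpha) = \alpha/(1+\alpha)$, this is exactly \eqref{eq:T-tail}. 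The restriction to $\alpha > 0$ here is because Lemma \ref{lemma:tail-genfunc} requires $\mu \in (0,1)$ strictly; when $\alpha = 0$ we get $\mu = 1$ and only the generating-function asymptotic \eqref{eq:h-form} survives, consistent with the statement.

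The main obstacle is the careful handling of the de Bruijn conjugate in the asymptotic inversion step: one must verify that the composition of slowly varying functions and the power change of variable interact correctly, and that the error term $(1-v)/(1-u) \to 1$ does not disturb the asymptotic once inverted — inversion of regularly varying functions is sensitive enough that an a priori $o(1)$ multiplicative perturbation must be argued not to spoil the leading asymptotic, which follows from the uniform convergence theorem for slowly varying functions but deserves an explicit line. Everything else — the functional equation, the reduction $v \sim u^{1+\alpha}\ell_A(1/u)$, and the final Tauberian step via Lemma \ref{lemma:tail-genfunc} — is routine.
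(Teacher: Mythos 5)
Your proof is correct and follows essentially the same route as the paper: the functional equation $h(s)=s f(h(s))$ reduces to $(1-h(s))^{1+\alpha}\ell_A(1/(1-h(s)))\sim 1-s$, which is inverted via the de Bruijn conjugate (your detour through the asymptotic inverse of $\psi(x)=x^{1+\alpha}\ell_A(x)^{-1}$ is just the paper's $(1+\alpha)$-th-root step in different clothing), and the tail then follows from Lemma \ref{lemma:tail-genfunc} with $\mu=1/(1+\alpha)$, exactly as in the paper, including the correct reason for excluding $\alpha=0$ in \eqref{eq:T-tail}.
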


We note that for $\alpha = 0$ the generating function asymptotics 
\eqref{eq:h-form} does not imply \eqref{eq:T-tail}, only the regular 
variation of the truncated mean. For $\alpha = 0$ the regular variation 
of $\p ( T > x)$ is equivalent to that $1/\ell_{A,1}^{\#}$ belongs 
to the de Haan class $\Pi$, see \cite[Corollary 8.1.7]{BGT} and 
the remark after it.

\subsection{Partial sum}

We are interested in the asymptotic behavior of the 
partial sums $S_n = \sum_{i=1}^n X_i$.

\begin{theorem} \label{thm:sum}
Let $(X_n)$ be the stationary Markov chain in \eqref{eq:X-def-stat}, and 
assume that the critical offspring distribution has generating function 
\eqref{eq:f-ass} for some $\alpha \in (0,1]$, and for the immigration either 
(B1) or (B2) holds.
Let $\eta = 1/(1 + \alpha)$ in case of (B1), 
and $\eta = \beta/(1 + \alpha)$ in case of (B2).
Then
\[
\frac{S_n}{\widetilde \ell(n) n^{1/\eta}}
\stackrel{\mathcal{D}}{\longrightarrow} V(\eta),
\]
where $V(\eta)$ is a nonnegative stable law with index $\eta$, and 
$\widetilde \ell$ is slowly varying.
\end{theorem}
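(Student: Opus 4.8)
The plan is to compute the generating function of $S_n = \sum_{i=1}^n X_i$ explicitly, and then read off the limit law via a Tauberian/analytic argument. The key structural fact is that the Galton--Watson mechanism makes the joint generating function of the $X_i$'s a telescoping product, so the Laplace transform of $S_n$ can be written down in closed form. Concretely, starting from the stationary process \eqref{eq:X-def-stat}, condition successively on $X_{n-1}, X_{n-2}, \ldots$: one gets
\[
\E s^{S_n} = \E \left( s^{X_n} \E( s^{X_{n-1}} \cdots \mid X_{n-1} ) \right),
\]
and iterating the branching recursion (each individual alive at time $i$ contributes a subtree whose total progeny generating function is $h$ from \eqref{eq:T-def}) shows that $\E s^{S_n}$ is a product of factors of the form $g$ evaluated at nested compositions built from $f$ and the multiplication-by-$s$ map. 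This is precisely the same computation that yields \eqref{eq:def-phi} for $X_\infty$, but now tracking the extra factor $s$ at each generation; the total-progeny generating function $h$, with its asymptotics \eqref{eq:h-form}, is what encodes the cumulative effect of a single immigrant's descendants summed over all future times.

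Next I would take $s = e^{-\lambda/a_n}$ with $a_n = \widetilde\ell(n) n^{1/\eta}$ and analyze $-\log \E e^{-\lambda S_n / a_n} = \sum_{i} \bigl(1 - g(\cdot)\bigr)(1 + o(1))$ as $n \to \infty$. Using Lemma \ref{lemma:tail-genfunc} together with \eqref{eq:f-ass}, \eqref{eq:g-ass} and Lemma \ref{lemma:T-tail}, each summand behaves like a power $(1-s_i)^{\eta} \widetilde\ell(\cdots)$ for the appropriate nested argument $s_i$, and the sum over $i$ from $1$ to $n$ converges, after the scaling, to $c \lambda^{\eta}$ for a positive constant $c$. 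That identifies the limit as the nonnegative $\eta$-stable law $V(\eta)$ with Laplace transform $e^{-c\lambda^{\eta}}$; absorbing $c$ into the slowly varying norming gives the stated form. The value $\eta = 1/(1+\alpha)$ in case (B1) comes from the total-progeny exponent in \eqref{eq:T-tail} (a single immigrant generates a subtree of size $\asymp T$, and $\E B < \infty$ means the number of immigrants does not inflate the tail), whereas in case (B2) the heavier immigration tail $\beta$ combines with the total-progeny scaling to give $\eta = \beta/(1+\alpha)$.

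The main obstacle will be controlling the nested compositions carefully enough to justify the termwise asymptotics and the passage to the limit in the sum: one needs uniform estimates on $1 - f_k(s)$ (equivalently $1 - h$-type quantities) as both $k \to \infty$ and $s \uparrow 1$ in a coordinated way, since the dominant contribution to $S_n$ comes from immigrants that arrived a number of generations ago comparable to $n$. This is exactly the regime where the critical process spreads out the mass over many time steps — which is also why the extremal index is $0$ and the norming $n^{1/\eta}$ differs from the $n^{1/\gamma}$ one would naively expect from the stationary tail in \eqref{eq:stat-ass}. I would handle this by splitting the sum at a slowly growing cutoff $m = m_n$, using the exact product formula and monotonicity of $h$ and $f$ to bound the tail blocks, and invoking uniform convergence theorems for regularly varying functions (Potter bounds, \cite[Theorem 1.5.6]{BGT}) to make the Riemann-sum approximation rigorous. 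A secondary technical point is that $S_n$ is a sum of dependent, heavy-tailed terms with $\E X_\infty = \infty$ when $\gamma \le 1$, so no truncation-free second-moment argument is available; the generating-function route sidesteps this entirely, which is why I prefer it to a direct point-process or big-jump heuristic.
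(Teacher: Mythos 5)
Your route is sound and genuinely different from the paper's. You work analytically: writing $\E s^{S_n}=\varphi\bigl(f(h_{n-1}(s))\bigr)\prod_{j=1}^{n}g\bigl(h_{n-j}(s)\bigr)$ with the cumulative-progeny iterates $h_0(s)=s$, $h_{m+1}(s)=s f(h_m(s))$, and then letting $s=e^{-\lambda/a_n}$ to show $-\log\E e^{-\lambda S_n/a_n}\to c\lambda^{\eta}$; the exponents you extract ($\eta=1/(1+\alpha)$, resp. $\beta/(1+\alpha)$) are the right ones, and the crux you correctly identify is the uniform control of $1-h_j(s)$ jointly in $j\to\infty$ and $s\uparrow 1$ (monotonicity $h_j(s)\ge h(s)$ gives the upper bound for free; the matching lower bound needs the observation that the crossover scale $j\asymp(1-s)^{-\alpha/(1+\alpha)}$ is $o(n)$, so all but a negligible fraction of the factors are already in the ``total progeny'' regime). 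The paper instead argues probabilistically: it decomposes $S_n=S_{1,n}-S_{2,n}+S_3$, where $S_{1,n}=\sum_{j=1}^{n}U_j$ is an iid sum with $U\stackrel{\mathcal{D}}{=}\sum_{i=1}^{B}T_i$ (total future progeny of each batch of immigrants, $T$ as in \eqref{eq:T-def}), and $S_{2,n}+S_3$ is tight because $S_{2,n}\stackrel{\mathcal{D}}{=}\sum_{k=1}^{X_\infty}(T_k-1)$; the tail of $U$ is then read off from Lemmas \ref{lemma:T-tail} and \ref{lemma:rsum-tail}, and the classical iid stable CLT finishes the proof. In effect the paper's decomposition replaces your delicate joint $(j,s)$ asymptotics by the single remark that the progeny generated after time $n$ is $O_\p(1)$, which is why no uniform estimates on nested compositions are needed there; your approach buys a closed-form transform and would also deliver the norming constant directly, at the price of carrying out the Potter-bound/cutoff analysis you sketch (the $\varphi$-factor is harmless in either case, since its argument tends to $1$). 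One small correction to your heuristic: the dominant contribution does not come specifically from immigrants who arrived $\asymp n$ generations ago; every batch older than the crossover scale $o(n)$ contributes a full total-progeny-sized term, and the stable limit comes from the largest among these $n$ essentially iid heavy-tailed contributions.
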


Note that for $\alpha < 1$ the tail of the stationary distribution is 
regularly varying with index $-(1-\alpha)$ in 
case of (B1), and $-(\beta- \alpha)$ in case of (B2), which suggests
larger scaling than in the theorem above.
In particular, this also implies that the stable central limit theorem 
Theorem 9.2.1 in \cite{MikoschWinten}
(see also \cite[Theorem 8.3.1]{KS19}) does not hold, hence either
the mixing or the anti-clustering condition (different from \eqref{eq:antic})
is not satisfied.

\subsection{Maxima}

For a stationary time series extremal dependence can be measured by 
the \emph{extremal index}, introduced by Leadbetter \cite{Leadbetter83},
defined as follows, see Definition 6.1.7 in \cite{MikoschWinten}, or 
\cite[Definition 7.5.1]{KS19}. 
If for each $\tau > 0$ there exists a sequence $u_n(\tau)$ for which 
\begin{equation} \label{eq:def-un}
\lim_{n \to \infty} n \p ( Y_0 > u_n(\tau) ) = \tau,
\end{equation}
such that  
\begin{equation} \label{eq:def-theta}
\lim_{n \to \infty} \p ( M_n \leq u_n(\tau) ) = e^{-\theta_Y \tau},
\end{equation}
for some $\theta_Y \in [0,1]$, where $M_n = \max \{ Y_1, \ldots, Y_n \}$
stands for the partial maxima,
then $\theta_Y$ is the extremal index of $(Y_n)$. 
For an iid sequence $\theta_Y  =1$. For further properties we refer 
to \cite[Section 3.7]{Leadbetter83} and \cite[Section 6]{MikoschWinten}.
\smallskip

Turning back to our stationary Markov chain $(X_n)$, let us assume that 
\eqref{eq:stat-ass} holds.  Then the sequence $u_n(\tau)$ in \eqref{eq:def-un}
can be expressed as
\[
u_n(\tau) \sim (n/\tau)^{1/\gamma} \ell_1^{\#}(n)^{1/\gamma},
\]
where $\ell_1(x) = 1/\ell(x^{1/\gamma})$.
Simply, $M_n \leq S_n$.
By Theorems \ref{thm:tail} and \ref{thm:sum},
in case (i) $\gamma = 1-\alpha$ and $\eta = 1/(1+\alpha)$,
while in case (ii) $\gamma = \beta - \alpha$ and $\eta = \beta/(1+\alpha)$. In both cases 
$u_n(\tau) / n^{1/\eta} \widetilde \ell(n) \to \infty$, implying 
that \eqref{eq:def-theta} holds with $\theta_X  =0$.

\begin{corollary} \label{corr:ext}
Let $(X_n)$ be the stationary Markov chain in \eqref{eq:X-def-stat}.
Under the conditions of Theorem \ref{thm:tail}
the extremal index of $(X_n)$ exists and $\theta_X = 0$.
\end{corollary}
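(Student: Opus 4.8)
The plan is to exploit the elementary domination $M_n \le S_n$ together with the stable limit theorem of Theorem~\ref{thm:sum}, comparing the order of magnitude of the level $u_n(\tau)$ (dictated by the regularly varying tail of $X_\infty$) with the order of magnitude $n^{1/\eta}\widetilde\ell(n)$ of the partial sums. First I would pin down $u_n(\tau)$. Under the hypotheses of Theorem~\ref{thm:tail} the stationary tail satisfies \eqref{eq:stat-ass} with $\gamma \in (0,1)$, namely $\gamma = 1-\alpha$ under (B1) and $\gamma = \beta-\alpha$ under (B2) (both in $(0,1)$ since $\alpha \in (0,1)$ and, in case (B2), $\beta > \alpha$). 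Hence $x \mapsto 1/\p(X_\infty > x)$ is regularly varying of index $\gamma$, and inverting it (see e.g.~\cite[Theorem~1.5.12]{BGT}) yields, for each fixed $\tau > 0$, a choice of $u_n(\tau)$ satisfying \eqref{eq:def-un} with the asymptotics $u_n(\tau) \sim (n/\tau)^{1/\gamma}\,\ell_1^{\#}(n)^{1/\gamma}$, $\ell_1(x) = 1/\ell(x^{1/\gamma})$, as stated in the excerpt; in particular $u_n(\tau)$ is regularly varying in $n$ of index $1/\gamma$.

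Next I would carry out the exponent comparison. In case (B1) one has $1/\gamma = 1/(1-\alpha)$ and $1/\eta = 1+\alpha$, and $1/(1-\alpha) = 1 + \alpha + \alpha^2 + \cdots > 1+\alpha$ for $\alpha \in (0,1)$. In case (B2), $1/\gamma = 1/(\beta-\alpha)$ and $1/\eta = (1+\alpha)/\beta$, and since $(1+\alpha)(\beta-\alpha) = \beta + \alpha(\beta-\alpha-1) < \beta$ (using $\beta < 1$ and $\alpha > 0$), again $1/\gamma > 1/\eta$. Thus in both cases $u_n(\tau)$ is regularly varying in $n$ with index strictly exceeding $1/\eta$, so
\[
\frac{u_n(\tau)}{n^{1/\eta}\widetilde\ell(n)} \longrightarrow \infty \qquad (n \to \infty),
\]
the polynomial gap absorbing the slowly varying corrections $\ell_1^{\#}$ and $\widetilde\ell$.

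Finally, to conclude: since all $X_i$ are nonnegative, $M_n = \max_{1 \le i \le n} X_i \le S_n$, whence
\[
\p\big(M_n \le u_n(\tau)\big) \ge \p\big(S_n \le u_n(\tau)\big)
= \p\!\left( \frac{S_n}{n^{1/\eta}\widetilde\ell(n)} \le \frac{u_n(\tau)}{n^{1/\eta}\widetilde\ell(n)} \right).
\]
By Theorem~\ref{thm:sum} the left-hand ratio converges in distribution to the (finite) random variable $V(\eta)$, while the right-hand threshold tends to $+\infty$; hence the probability on the right tends to $1$, and therefore $\p(M_n \le u_n(\tau)) \to 1 = e^{-0\cdot\tau}$ for every $\tau > 0$. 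This is exactly \eqref{eq:def-theta} with $\theta_X = 0 \in [0,1]$, proving that the extremal index exists and equals $0$.

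As for obstacles: there is essentially none of substance, as the corollary is a short consequence of Theorems~\ref{thm:tail} and~\ref{thm:sum}. The only point needing a little care is the strict inequality $1/\gamma > 1/\eta$ in both regimes, which is what guarantees that the polynomial orders---rather than the slowly varying factors---decide the limit; once that is verified, the domination argument is immediate.
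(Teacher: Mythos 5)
Your proposal is correct and follows essentially the same route as the paper: the authors also obtain $u_n(\tau)\sim (n/\tau)^{1/\gamma}\ell_1^{\#}(n)^{1/\gamma}$ from the regular variation in Theorem \ref{thm:tail}, use the domination $M_n\le S_n$, and conclude from Theorem \ref{thm:sum} that $u_n(\tau)/\bigl(n^{1/\eta}\widetilde\ell(n)\bigr)\to\infty$ in both regimes, which gives \eqref{eq:def-theta} with $\theta_X=0$. Your explicit verification of the strict inequalities $1/(1-\alpha)>1+\alpha$ and $1/(\beta-\alpha)>(1+\alpha)/\beta$ just fills in the exponent comparison the paper leaves implicit.
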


The extremal index $\theta_X  = 0$ is somewhat pathological, see 
Remark 6.1.10 in \cite{MikoschWinten}, and the discussion in \cite[Section 3.7]{Leadbetter83}. There are only a few examples of such process, see e.g.~\cite{Denzel}.
A notable one is the Lindley process in queueing theory, see 
\cite[Section 6.3.6]{MikoschWinten}. Thus we see that the 
extremal behavior of $(X_n)$ and the Lindley process is similar,
as both have the same tail process and both have extremal index 
0. For an intuitive meaning of $\theta_X =0$, see the remark after 
Proposition 6.3.14 in \cite{MikoschWinten}.

Finally, we note that we expect that there is a proper normalization
for the maxima $M_n$. We showed in the beginning of Section \ref{sect:stat-dist} that the Markov chain has an accessible atom,
therefore the idea of the proof of \cite[Proposition 6.3.14]{MikoschWinten} could work. However, determining the 
tail behavior of the cycle maxima remains for further studies.

\section{Proofs} \label{sect:proofs}

\subsection{Proof of Theorem \ref{thm:tail}}

To ease notation, put 
$\ell_1(x)  = \ell_A(x^{1/\alpha}) ^{-1}$.
By Lemma~2 in \cite{Slack},
\[
(1-f_n(0))^\alpha \ell_A 
\left(\frac{1}{1-f_n(0)}\right) \sim \frac{1}{\alpha n}.
\]
Rewriting, we have
\[
(1- f_n(0))^{-\alpha} \ell_1((1-f_n(0))^{-\alpha}) \sim \alpha n,
\]
therefore,
\begin{equation}\label{eq:fn-asy}
1-f_n(0) \sim (\alpha n \ell_1^\#(\alpha n))^{-\tfrac{1}{\alpha}}    
\sim (\alpha n \ell_1^\#(n))^{-\tfrac{1}{\alpha}}.
\end{equation}
Since $f_n(0) \uparrow 1$, for all $s \in (f_1(0),1)$ there exists $m(s)$ such that
\[
f_{m(s)}(0) < s < f_{m(s)+1}(0).
\]
Hence,
\[
1-f_{m(s)+1}(0) < 1-s < 1-f_{m(s)}(0),
\]
and by \eqref{eq:fn-asy}, as $s \uparrow 1$
\begin{equation} \label{eq:m-asy0}
1-s \sim \left(\alpha m(s) \ell_1^\#(m(s)) \right)^{-1/\alpha}.
\end{equation}
It follows that
\begin{equation} \label{eq:m-asy}
m(s) \sim \alpha^{-1} (1-s)^{-\alpha} \, \ell_1\left((1-s)^{-\alpha}\right),
\quad s \uparrow 1.
\end{equation}
Furthermore, by \eqref{eq:fn-asy} as $n + m(s) \to \infty$
\begin{equation*}\label{eq:f_n}
\begin{split}
1 - f_n(s) & \sim 1 - f_n(f_{m(s)}(0)) 
= 1 - f_{n+m(s)}(0) \\ 
& \sim (\alpha(n+m(s)))^{-1/\alpha} \, \ell_1^{\#}(n+m(s))^{-1/\alpha}.    
\end{split}
\end{equation*}

For any $\varepsilon > 0$, there exists $\delta(\varepsilon) > 0$
such that for $x \in (0,\delta(\varepsilon))$ we have 
$- ( 1 + \varepsilon) x \leq \log ( 1 -x) \leq - x$. Since 
$f_n(s) \geq f_1(s) \geq s$, there exists $s(\varepsilon) \in (0,1)$
such that $g(s(\varepsilon)) \geq 1 - \delta(\varepsilon)$. Then 
for all $n = 0,1,\ldots$, and $s \in (s(\varepsilon),1)$
\begin{equation} \label{eq:gf-aux1}
- ( 1 + \varepsilon) ( 1 - g(f_n(s))) \leq \log g(f_n(s)) \leq 
- ( 1 - g(f_n(s))).
\end{equation}

\noindent \textbf{Case (B1):}
First, we suppose that the immigration has finite mean, 
$\E B = g'(1) < \infty$.
There exists $y_\varepsilon \in (0,1)$ such that 
$(1-y) (1-\varepsilon) g'(1) \leq 1- g(y) \leq (1-y) g'(1)$
for $y \in (y_\varepsilon, 1)$, we obtain from \eqref{eq:gf-aux1}
and Corollary \ref{cor:X-inf} that for 
$s \geq \max\{ y_\varepsilon, s_\varepsilon \}$
\begin{equation*} \label{eq:varphi-asy-aux1}
-(1 + \varepsilon) g'(1) \sum_{n=0}^\infty ( 1 - f_n(s)) \leq 
\log \varphi(s) \leq 
-(1 - \varepsilon) g'(1) \sum_{n=0}^\infty ( 1 - f_n(s)).
\end{equation*}
As $\varepsilon > 0$ is arbitrarily small, we obtain that 
\begin{equation} \label{eq:varphi-asy}
-\log \varphi(s) \sim 
g'(1) \sum_{n=0}^\infty ( 1 - f_n(s)) \quad 
\text{as } s \uparrow 1.
\end{equation}

Fix $N > 0$. Then, by \eqref{eq:m-asy0}, if $1-s$ is small enough
\begin{equation} \label{eq:phi-sum-aux1}
\sum_{n=0}^{\lfloor m(s)/N \rfloor} (1 - f_n(s))
\leq \frac{m(s)}{N} (1- s) \leq 
\frac{2 \alpha^{-1/\alpha} }{N} m(s)^{1-1/\alpha} \ell_1^{\#}(m(s))^{-1/\alpha},
\end{equation}
and as $s \uparrow 1$
\begin{equation} \label{eq:phi-sum-aux2}
\begin{split}
& \sum_{n=\lfloor N m(s) \rfloor}^\infty (n+m(s))^{-1/\alpha} 
\ell_1^\#(n+m(s))^{-1/\alpha} \\ 
& \leq \sum_{n=\lfloor N m(s) \rfloor}^\infty n^{-1/\alpha} \, 
\ell_1^\#(n)^{-1/\alpha}   \\
& \sim (N m(s))^{1-1/\alpha}\,\frac{1}{\tfrac{1}{\alpha} - 1} \, 
\ell_1^\#(N m(s))^{-1/\alpha} \\
& \sim \frac{\alpha}{1 - \alpha} N^{1 - 1/\alpha} \, m(s)^{1-1/\alpha} \, 
\ell_1^{\#}(m(s))^{-1/\alpha}.
\end{split}
\end{equation}

For the main contribution, by the uniform convergence theorem
for slowly varying functions
\begin{equation} \label{eq:phi-sum-aux3}
\begin{split}
& \sum_{n=\lfloor m(s)/N \rfloor}^{\lfloor N m(s) \rfloor} 
(n+m(s))^{-1/\alpha} \ell_1^\#(n+m(s))^{-1/\alpha} \\
&= m(s)^{1-\frac{1}{\alpha}} 
\sum_{n=\lfloor  \frac{m(s)}{N} \rfloor}^{\lfloor N m(s) \rfloor} 
\frac{1}{m(s)} \Big(\tfrac{n}{m(s)} + 1\Big)^{-1/\alpha} 
\ell_1^\#\!\big(m(s)(\tfrac{n}{m(s)} + 1)\big)^{-1/\alpha} \\
&\sim m(s)^{1-1/\alpha} \ell_1^\#(m(s))^{-1/\alpha} 
\int_{N^{-1}}^N (1 + y)^{-1/\alpha} \, \mathrm{d}y.
\end{split}
\end{equation}
Hence, letting $N \to \infty$, by \eqref{eq:varphi-asy},
\eqref{eq:phi-sum-aux1}, \eqref{eq:phi-sum-aux2},
and \eqref{eq:phi-sum-aux3} we obtain
\begin{equation} \label{eq:varphi-asy2}
\begin{split}
- \log \varphi(s) & \sim g'(1) \alpha^{-1/\alpha} \frac{\alpha}{1-\alpha}
m(s)^{1- 1/\alpha} \ell_1^\#(m(s))^{-1/\alpha}. 
\end{split}
\end{equation}
Finally, using \eqref{eq:m-asy} and that 
$\ell_1^{\#}(x \ell_1(x)) \sim \ell_1(x)^{-1}$ we have
\[
m(s)^{1-1/\alpha} \ell_1^{\#}(m(s))^{-1/\alpha} 
\sim \alpha^{1/\alpha -1} (1-s)^{1-\alpha} \ell_1((1-s)^{-\alpha}).
\]
Substituting back into \eqref{eq:varphi-asy2}
\[
- \log \varphi(s) 
\sim \frac{g'(1)}{1-\alpha}
(1-s)^{1-\alpha} \frac{1}{\ell_A(1/(1-s))}, \quad s \uparrow 1.
\]
Since $-\log \varphi(s) \sim 1- \varphi(s)$ as $s \uparrow 1$, the 
statement follows from Lemma \ref{lemma:tail-genfunc}.
\medskip 

\noindent \textbf{Case (B2):}
Now consider the case, when the immigration satisfies 
\eqref{eq:g-ass} with $\beta \in (\alpha,1)$, and a slowly 
varying $\ell_B$. 
Similarly, as before
\[
\begin{split}
& - \log \varphi(s) 
\sim \sum_{n=0}^\infty (1-g(f_n(s))) \\
& \sim \sum_{n=0}^\infty (1-f_n(s))^\beta 
\, \ell_B\!\big((1-f_n(s))^{-1}\big) \\
& \sim \sum_{n=0}^\infty (1-f_{n+m(s)}(0))^\beta 
\, \ell_B\!\big((1-f_{n+m(s)}(0))^{-1}\big) \\
& \sim \sum_{n=0}^\infty 
\big( \alpha (n+m(s)) \, \ell_1^{\#}(n+m(s))\big)^{-\frac{\beta}{\alpha}}
\ell_B \left( ((n+m(s)) {\ell_1^\#(n+m(s))})^{\frac{1}{\alpha}}\right) \\
&= \alpha^{-\beta/\alpha} 
\sum_{n=0}^\infty (n+m(s))^{-\beta/\alpha} \, \ell_2(n+m(s)),
\end{split}
\]
where the asymptotic equality holds as $s \uparrow 1$, and 
\[
\ell_2(x) = (\ell_1^\#(x))^{-\beta/\alpha} 
\ell_B\left( (x \ell_1^{\#}(x))^{1/\alpha} \right).
\]
This is analogous to the previous case, with $\alpha$ replaced by $\alpha/\beta$
and $(\ell_1^{\#})^{-1/\alpha}$ by $\ell_2$. 
Therefore, the same computation implies
\begin{equation} \label{eq:varphi2-asy}
-\log \varphi(s)  \sim \alpha^{-\beta/\alpha}
\frac{\alpha}{\beta - \alpha} \, m(s)^{1-\beta/\alpha} \, \ell_2(m(s)), 
\quad s \uparrow 1.
\end{equation}
Noting that $\ell_2(x \ell_1(x)) \sim \ell_1(x)^{\beta/\alpha} \ell_B(x^{1/\alpha})$,
using \eqref{eq:m-asy} we have
\[
m(s)^{1-\beta/\alpha} \, \ell_2(m(s))
\sim \alpha^{\beta/\alpha -1} (1-s)^{\beta -\alpha}
\ell_1((1-s)^{-\alpha}) \ell_B ((1-s)^{-1}).
\]
Substituting into \eqref{eq:varphi2-asy}
\[
-\log \varphi(s)  \sim 
\frac{1}{\beta - \alpha} (1-s)^{\beta -\alpha}
\frac{\ell_B ((1-s)^{-1})}{\ell_A((1-s)^{-1}) }, \quad s \uparrow 1.
\]
The result follows again from Lemma \ref{lemma:tail-genfunc}.

\subsection{Proofs of Theorem \ref{thm:tail-proc}}

The regular variation property implies that
$\mathcal{L}(X_0 / x | X_0 > x) \stackrel{\mathcal{D}}{\rightarrow} U_0$.
As $x_0 \to \infty$ by the law of large numbers a.s.
\[
\frac{1}{x_0} \left( \sum_{i=1}^{x_0} A_i + B_i \right)
\longrightarrow 1.
\]
Therefore, as $x \to \infty$
\[
\mathcal{L}\left( \frac{X_1}{X_0} \Big| X_0 > x \right) \longrightarrow 
\delta_1,
\]
the latter being the degenerate distribution at 1.
The statement follows by an induction argument.

\subsection{Proof of Theorem \ref{thm:sum}}

\begin{proof}[Proof of Lemma \ref{lemma:T-tail}]
Rewriting the equation $h(s) = s f(h(s))$, and using \eqref{eq:f-ass} 
we obtain
\[
\frac{(1-h(s))^{1+\alpha}}{1-s} = \frac{h(s)}{s} \frac{1}{\ell_A(1/(1-h(s)))}.
\]
Thus, as $s \uparrow 1$
\begin{equation*} \label{eq:h-asy}
(1- h(s))^{1+\alpha} \ell_A(1/(1-h(s))) \sim 1-s.
\end{equation*}
Then with $x = (1 - h(s))^{-1}$
\[
x \ell_{A,1}(x) \sim (1-s)^{-1/(1+\alpha)},
\]
thus \eqref{eq:h-form} follows from the definition of the de Bruijn conjugate.
The tail asymptotic \eqref{eq:T-tail} follows from 
Lemma \ref{lemma:tail-genfunc}.
\end{proof}

Before turning to the proof of Theorem \ref{thm:sum} we 
need a result on the tail behavior 
of random sums, when both the number of summands and the summands are heavy-tailed with infinite mean. Part (i) is Proposition B.2.5 and part (ii) is Lemma B.2.7 in \cite{MikoschWinten}.

\begin{lemma} \label{lemma:rsum-tail}
Let $Y, Y_1, \ldots$ be iid nonnegative random variables, and 
independent of $Y$'s, $\tau$ a nonnegative integer-valued random 
variable. 
\begin{itemize}
\item[(i)] 
Assume that $\p ( Y > x) \in \mathcal{RV}_{-\nu}$, 
for some $\nu \in (0,1]$, and $\E \tau < \infty$. If $\nu = 1$ 
further assume that $\p ( \tau > x) = o( \p ( Y > x))$, as 
$x  \to \infty$. Then as $x \to \infty$
\[
\p \left( \sum_{i=1}^\tau Y_i > x \right) \sim \E \tau \, \p ( Y > x).
\]

\item[(ii)] 
Assume that for some $\nu \in (0,1)$, and slowly varying $\ell_Y$
\begin{equation*} \label{eq:Y-ass}
\p ( Y > x) = \frac{\ell_Y(x)}{\Gamma(1-\nu) x^\nu},
\end{equation*}
and  for some $\mu \in (0,1)$ and slowly varying $\ell_\tau$
\begin{equation*} \label{eq:tau-ass}
\p (\tau > x) = \frac{\ell_\tau(x)}{\Gamma(1-\mu) x^\mu}.
\end{equation*}
Then
\[
\p \left( \sum_{i=1}^\tau Y_i > x \right) \sim 
\frac{1}{\Gamma(1- \mu \nu)}
\frac{\ell_Y(x)^\mu \ell_\tau(x^\nu /\ell_Y(x))}{x^{\mu \nu}}.
\]
\end{itemize}
\end{lemma}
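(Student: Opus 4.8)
The plan is to prove both parts by passing to transforms and reading off the tail via Karamata-type Tauberian theorems, exactly in the spirit of Lemma~\ref{lemma:tail-genfunc}. Write $\widehat Y(\theta)=\E[e^{-\theta Y}]$ and $G_\tau(u)=\E[u^\tau]$; conditioning on $\tau$ gives
\[
\E\bigl[e^{-\theta\sum_{i=1}^{\tau}Y_i}\bigr]=G_\tau\!\bigl(\widehat Y(\theta)\bigr),\qquad \theta>0 .
\]
For $\nu\in(0,1)$, writing $\p(Y>x)=\ell_Y(x)/(\Gamma(1-\nu)x^\nu)$ (with $\ell_Y(x):=\Gamma(1-\nu)x^\nu\p(Y>x)$ slowly varying, in case~(i)), the Karamata Tauberian theorem gives the equivalence with $1-\widehat Y(\theta)\sim\theta^{\nu}\ell_Y(1/\theta)$ as $\theta\downarrow0$ — the factor $\Gamma(1-\nu)$ is exactly the normalisation that clears the constant. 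On the $\tau$-side, in case~(i) the finiteness of $\E\tau=G_\tau'(1)$ gives the elementary $1-G_\tau(u)\sim \E\tau\,(1-u)$ as $u\uparrow1$, while in case~(ii) Lemma~\ref{lemma:tail-genfunc} applied to $\tau$ gives $1-G_\tau(u)\sim\ell_\tau\!\bigl(1/(1-u)\bigr)(1-u)^{\mu}$.

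Composing is then immediate. Since $\widehat Y(\theta)\uparrow1$ as $\theta\downarrow0$, in case~(i) with $\nu<1$ one gets $1-G_\tau(\widehat Y(\theta))\sim\E\tau\,(1-\widehat Y(\theta))\sim\E\tau\,\theta^{\nu}\ell_Y(1/\theta)$, and the reverse Tauberian step (legitimate since $0<\nu<1$ and the tail is monotone) yields $\p(\sum_{i=1}^{\tau}Y_i>x)\sim\E\tau\,\ell_Y(x)/(\Gamma(1-\nu)x^{\nu})=\E\tau\,\p(Y>x)$. In case~(ii), substituting $1-u=1-\widehat Y(\theta)\sim\theta^{\nu}\ell_Y(1/\theta)$ gives
\[
1-G_\tau\!\bigl(\widehat Y(\theta)\bigr)\sim\theta^{\mu\nu}\,\ell_Y(1/\theta)^{\mu}\,\ell_\tau\!\bigl(\theta^{-\nu}/\ell_Y(1/\theta)\bigr)=:\theta^{\mu\nu}\ell^{*}(1/\theta),
\]
where one checks that $\ell^{*}(y)=\ell_Y(y)^{\mu}\ell_\tau(y^{\nu}/\ell_Y(y))$ is slowly varying (under $y\mapsto\lambda y$ the inner argument changes only by a bounded factor). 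As $\mu\nu\in(0,1)$, the Tauberian theorem gives $\p(\sum_{i=1}^{\tau}Y_i>x)\sim\ell^{*}(x)/(\Gamma(1-\mu\nu)x^{\mu\nu})$, which is the claimed formula, the constant $1/\Gamma(1-\mu\nu)$ appearing automatically from the two normalised Tauberian steps.

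The delicate point is the boundary exponent $\nu=1$ in part~(i), where the clean dictionary breaks: $1-\widehat Y(\theta)\sim\theta\int_0^{1/\theta}\p(Y>t)\,\mathrm{d}t$ and the integral is only slowly varying, not necessarily $\sim\ell_Y$ (the de~Haan / $\Pi$-class phenomenon flagged after Theorem~\ref{thm:tail}). Here I would argue directly: the lower bound $\p(\sum_{i=1}^{\tau}Y_i>x)\ge\p(\max_{i\le\tau}Y_i>x)=\E[1-(1-\p(Y>x))^{\tau}]\sim\E\tau\,\p(Y>x)$ follows from $\E\tau<\infty$ by dominated convergence; for the matching upper bound, split on $\{\tau\le L(x)\}$ for a suitably slowly growing $L(x)\to\infty$, apply the single-big-jump bound $\p(\sum_{i=1}^{k}Y_i>x)\le k\,\p(Y>(1-\delta)x)(1+o(1))$ uniformly over $k\le L(x)$, and absorb the remainder via $\p(\tau>L(x))=o(\p(Y>x))$, which is where the extra hypothesis $\p(\tau>x)=o(\p(Y>x))$ is used. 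I expect this truncation-and-uniformity step — choosing $L(x)$ and justifying the uniform single-big-jump estimate for regularly varying (hence subexponential) summands — to be the only real technical obstacle; the rest is bookkeeping with slowly varying functions and the standard Tauberian correspondence.
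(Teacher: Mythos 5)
Your transform argument is correct for part (ii) and for part (i) when $\nu\in(0,1)$, and it is a genuinely different route from the paper, which does not prove the lemma at all but simply cites Proposition B.2.5 and Lemma B.2.7 of Mikosch and Wintenberger. Writing $\E e^{-\theta\sum_{i=1}^\tau Y_i}=G_\tau(\widehat Y(\theta))$, using $1-G_\tau(u)\sim\E\tau\,(1-u)$ (respectively the analogue of Lemma~\ref{lemma:tail-genfunc} for $\tau$), checking that $\ell^{*}(y)=\ell_Y(y)^{\mu}\ell_\tau(y^{\nu}/\ell_Y(y))$ is slowly varying, and inverting by the Karamata Tauberian theorem (legitimate for an index in $(0,1)$ because the tail is monotone) is complete and self-contained; it also makes transparent where the constants $\Gamma(1-\nu)$, $\Gamma(1-\mu)$, $\Gamma(1-\mu\nu)$ come from, and it matches the Tauberian machinery the paper already uses elsewhere. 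For the paper's actual application (Theorem~\ref{thm:sum}) only $\nu=1/(1+\alpha)<1$ is needed, so this part of your argument covers everything the paper uses.

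The genuine gap is the boundary case $\nu=1$ of part (i), which you flag but do not close, and the plan you sketch would fail as written. If $L(x)\to\infty$ slowly (so $L(x)=o(x)$ by a substantial factor), then $\p(\tau>L(x))=o(\p(Y>x))$ is \emph{not} a consequence of the hypothesis $\p(\tau>x)=o(\p(Y>x))$: with $\p(Y>x)\sim x^{-1}$ and $\p(\tau>n)\asymp n^{-1}(\log n)^{-2}$ one has $\p(\tau>L(x))\gg\p(Y>x)$ whenever $L(x)=o(x/(\log x)^{2})$, so the remainder cannot be absorbed that way. If instead you take $L(x)=\epsilon x$, so that $\p(\tau>L(x))=o(\p(Y>\epsilon x))=o(\p(Y>x))$ does hold, then the uniform single-big-jump bound $\p(\sum_{i=1}^{k}Y_i>x)\le(1+o(1))\,k\,\p(Y>(1-\delta)x)$ for all $k\le L(x)$ is false when $\E Y=\infty$, which is allowed at $\nu=1$: with $\p(Y>x)\sim x^{-1}$ and $k=\lfloor 2x/\log x\rfloor\le\epsilon x$ one has $\sum_{i=1}^{k}Y_i\approx k\log k\approx 2x$ with probability tending to one, so the left-hand side tends to $1$ while the right-hand side is $O(1/\log x)$. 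The relevant scale at $\nu=1$ with infinite mean is $x/\E(Y\wedge x)$, not $x$, and controlling the contribution of $\tau$ of that order is precisely what a complete argument (as in the cited Proposition B.2.5, via truncation at level $\eta x$ and Fuk--Nagaev-type bounds for the truncated sums) has to do; it is not mere bookkeeping. In the finite-mean subcase $\E Y<\infty$ your scheme can be repaired, since then the truncated part of the sum is of order $k\E Y$ and $\p(\tau>\delta x)=o(\p(Y>x))$ handles the rest; but as a proof of the lemma as stated, the $\nu=1$ case is incomplete.
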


We note that in (i) the extra condition $\p ( \tau > x) / \p ( Y > x) \to 0$
for $\nu = 1$ is very weak, since $x \p ( \tau > x) \to 0$ always holds 
by $\E \tau < \infty$. In general, the extra condition is needed, even 
if $\E Y = \infty$ is assumed. It is clear from the proof that similar statement holds in (ii) allowing $\mu = 1$ and $\nu = 1$ if the appearing slowly varying function belongs to the 
de Haan class, see the remark after Lemma \ref{lemma:T-tail}.
For the sake of readability, we decided to exclude this case.

\begin{proof}[Proof of Theorem \ref{thm:sum}]
Write 
\[
\begin{split}
X_n &  = B_n + \theta_{n} \circ B_{n-1} + 
\ldots + \theta_n \circ \ldots  \circ \theta_2 \circ B_1 + 
\theta_n \circ \ldots \circ \theta_1 \circ X_0 \\
& =: \sum_{i=1}^n \Theta_{i+1,n} \circ B_i + \Theta_{1,n} \circ X_0.
\end{split}
\]
We can decompose the partial sum as
\[
\begin{split}
S_n & = \sum_{i=1}^n X_i 
= \sum_{i=1}^n \left( \sum_{j=1}^i \Theta_{j+1,i} \circ B_j + 
\Theta_{1,i} \circ X_0 \right) \\
& = \sum_{j=1}^n \sum_{i=j}^n \Theta_{j+1,i} \circ B_j 
+ \sum_{i=1}^n \Theta_{1,i} \circ X_0
\\
& = \sum_{j=1}^n \sum_{i=j}^\infty \Theta_{j+1,i} \circ B_j
- \left( \sum_{j=1}^n \sum_{i=n+1}^\infty \Theta_{j+1,i} \circ B_j
+ \sum_{i=n+1}^\infty \Theta_{1,i} \circ X_0 \right) \\
& \quad + \sum_{i=1}^\infty \Theta_{1,i} \circ X_0 \\
& = S_{1,n} - S_{2,n} + S_{3}.
\end{split}
\]
Recalling the notation from \eqref{eq:T-def}, let 
$T = \sum_{i=0}^\infty Z_i$ denote the total population
in a critical Galton--Watson process $(Z_n)$, and let 
$T_1, T_2, \ldots$ be iid copies of $T$. Then
\[
S_{3} \stackrel{\mathcal{D}}{=} \sum_{i=1}^{X_0} (T_i - 1).
\]
Note that generation 0 is not included, that is the reason for the $-1$'s above. Furthermore,
\[
\begin{split}
S_{2,n} & = 
\sum_{j=1}^n \sum_{i=n+1}^\infty \Theta_{j+1,i} \circ B_j
+ \sum_{i=n+1}^\infty \Theta_{1,i} \circ X_0 \\
& = \sum_{i=n+1}^\infty 
\left( \sum_{j=1}^n \Theta_{j+1,i} \circ B_j + 
\Theta_{1,i} \circ X_0 \right)\\
& = \sum_{i=n+1}^\infty \Theta_{n+1,i} \circ 
\left( \sum_{j=1}^n \Theta_{j+1,n} \circ B_j 
+ \Theta_{1,n} \circ X_0 \right) \\
& = \sum_{i=n+1}^\infty \Theta_{n+1,i} \circ X_n
\stackrel{\mathcal{D}}{=} \sum_{k=1}^{X_{\infty}} (T_k - 1).
\end{split}
\]
That is, $S_{2,n} + S_{3} = O_\p ( 1)$.
Finally, $S_{1,n}$ is the sum of $n$ iid random variables
\[
S_{1,n} = \sum_{j=1}^n U_j,
\]
where $U, U_1, U_2,\ldots$ are iid, $U = \sum_{j=1}^B T_j$. 
Using Lemmas \ref{lemma:T-tail} and \ref{lemma:rsum-tail}
we can determine the tail behavior of $U$. 

If (B1) holds, then by Lemmas \ref{lemma:T-tail} and 
\ref{lemma:rsum-tail} (i) with $\tau = B$
\[
\p ( U > x) \sim \E B \, 
x^{-1/(1+\alpha)} \left( \ell_{A,1}^{\#}(x^{1/(1+\alpha)})
\Gamma(\alpha/(1+\alpha)) \right)^{-1}.
\]
While if (B2) holds, by 
Lemmas \ref{lemma:T-tail} and \ref{lemma:rsum-tail} (ii)
with $\tau = B$, $\nu = (1 + \alpha)^{-1}$, $\mu = \beta$,
\[
\p ( U > x) \sim 
x^{-\frac{\beta}{1+\alpha}} 
\frac{\ell_B\left( x^{{1}/{(1+\alpha)}} \ell_{A,1}^{\#}(x^{1/(1+\alpha)})
\right)}
{\Gamma \left( 1 - \frac{\beta}{1+\alpha} \right)
\ell_{A,1}^{\#}\left( x^{{1}/{(1+\alpha)}} \right)^{\beta}}.
\]
In both cases $U$ belongs to the domain of attraction of 
an $\eta$-stable law, and the result follows. 
\end{proof}


\begin{thebibliography}{10}

\bibitem{Alsmeyer}
G.~Alsmeyer and V.~H. Hoang.
\newblock Power-fractional distributions and branching processes.
\newblock {\em arXiv:2503.18563}, 2025.

\bibitem{AthreyaNey}
K.~B. Athreya and P.~E. Ney.
\newblock {\em Branching processes}.
\newblock Die Grundlehren der mathematischen Wissenschaften, Band 196.
  Springer-Verlag, New York-Heidelberg, 1972.

\bibitem{BK}
B.~Basrak and P.~Kevei.
\newblock Limit theorems for branching processes with immigration in a random
  environment.
\newblock {\em Extremes}, 25(4):623--654, 2022.

\bibitem{BKP}
B.~Basrak, R.~Kulik, and Z.~Palmowski.
\newblock Heavy-tailed branching process with immigration.
\newblock {\em Stoch. Models}, 29(4):413--434, 2013.

\bibitem{BasrakSegers}
B.~Basrak and J.~Segers.
\newblock Regularly varying multivariate time series.
\newblock {\em Stochastic Process. Appl.}, 119(4):1055--1080, 2009.

\bibitem{BGT}
N.~H. Bingham, C.~M. Goldie, and J.~L. Teugels.
\newblock {\em Regular Variation}, volume~27 of {\em Encyclopedia of
  Mathematics and its Applications}.
\newblock Cambridge University Press, Cambridge, 1989.

\bibitem{BorVat}
K.~A. Borovkov and V.~A. Vatutin.
\newblock On distribution tails and expectations of maxima in critical
  branching processes.
\newblock {\em J. Appl. Probab.}, 33(3):614--622, 1996.

\bibitem{Denzel}
G.~E. Denzel and G.~L. O'Brien.
\newblock Limit theorems for extreme values of chain-dependent processes.
\newblock {\em Ann. Probability}, 3(5):773--779, 1975.

\bibitem{Douc}
R.~Douc, E.~Moulines, P.~Priouret, and P.~Soulier.
\newblock {\em Markov chains}.
\newblock Springer Series in Operations Research and Financial Engineering.
  Springer, Cham, 2018.

\bibitem{Feller1}
W.~Feller.
\newblock {\em An introduction to probability theory and its applications.
  {V}ol. {I}}.
\newblock John Wiley \& Sons, Inc., New York-London-Sydney, third edition,
  1968.

\bibitem{FossMiy}
S.~Foss and M.~Miyazawa.
\newblock Tails in a fixed-point problem for a branching process with
  state-independent immigration.
\newblock {\em Markov Process. Related Fields}, 26(4):613--635, 2020.

\bibitem{FosterWill}
J.~H. Foster and J.~A. Williamson.
\newblock Limit theorems for the {G}alton-{W}atson process with time-dependent
  immigration.
\newblock {\em Z. Wahrscheinlichkeitstheorie und Verw. Gebiete}, 20:227--235,
  1971.

\bibitem{GuoHong}
J.~Guo and W.~Hong.
\newblock Precise large deviation for stationary sequence of branching process
  with immigration.
\newblock {\em arXiv:2405.04835}, 2024.

\bibitem{K24}
P.~Kevei.
\newblock Branching processes with immigration in a random environment---{T}he
  {G}rincevi\v cius-{G}rey setup.
\newblock {\em Statist. Probab. Lett.}, 214:Paper No. 110199, 7, 2024.

\bibitem{KS19}
R.~Kulik and P.~Soulier.
\newblock {\em Heavy-tailed time series}.
\newblock Springer-Verlag New York, 2020.

\bibitem{Leadbetter83}
M.~R. Leadbetter, G.~Lindgren, and H.~Rootz\'en.
\newblock {\em Extremes and related properties of random sequences and
  processes}.
\newblock Springer Series in Statistics. Springer-Verlag, New York-Berlin,
  1983.

\bibitem{MikoschWinten}
T.~Mikosch and O.~Wintenberger.
\newblock {\em Extreme value theory for time series---models with power-law
  tails}.
\newblock Springer Series in Operations Research and Financial Engineering.
  Springer, Cham, [2024] \copyright 2024.

\bibitem{Quine}
M.~P. Quine.
\newblock The multi-type {G}alton-{W}atson process with immigration.
\newblock {\em J. Appl. Probability}, 7:411--422, 1970.

\bibitem{SagitovLindo}
S.~Sagitov and A.~Lindo.
\newblock A special family of {G}alton-{W}atson processes with explosions.
\newblock In {\em Branching processes and their applications}, volume 219 of
  {\em Lect. Notes Stat.}, pages 237--254. Springer, [Cham], 2016.

\bibitem{Slack}
R.~S. Slack.
\newblock A branching process with mean one and possibly infinite variance.
\newblock {\em Z. Wahrscheinlichkeitstheorie und Verw. Gebiete}, 9:139--145,
  1968.

\bibitem{VWF}
V.~A. Vatutin, V.~Wachtel, and K.~Fleischmann.
\newblock Critical {G}alton-{W}atson branching processes: the maximum of the
  total number of particles within a large window.
\newblock {\em Teor. Veroyatn. Primen.}, 52(3):419--445, 2007.

\bibitem{Zhao}
Y.~Zhao.
\newblock The boundary principle of a single big jump: Refined asymptotics for
  branching processes with immigration.
\newblock {\em arXiv:2509.05650}, 2025.

\end{thebibliography}

\end{document}